\newcommand{\shrinkmargins}[1]{
  \addtolength{\textheight}{#1\topmargin}
  \addtolength{\textheight}{#1\topmargin}
  \addtolength{\textwidth}{#1\oddsidemargin}
  \addtolength{\textwidth}{#1\evensidemargin}
  \addtolength{\topmargin}{-#1\topmargin}
  \addtolength{\oddsidemargin}{-#1\oddsidemargin}
  \addtolength{\evensidemargin}{-#1\evensidemargin}
  }
\newtheorem{theorem}{Theorem}
\newtheorem*{lemma}{Lemma}
\newtheorem*{conj}{Conjecture}
\newtheorem*{corollary}{Corollary}
\newtheorem*{theorem*}{Theorem}
{Claim}
\theoremstyle{definition}
\theoremstyle{remark}
\newtheorem*{remark}{Remark}
\numberwithin{theorem}{section} \numberwithin{equation}{section}
\def\func#1{\mathop{\rm #1}}%
\def\pmatrix#1{\left(
#1
\right)}
\begin{document}
\title{Inequalities for $k$-regular partitions}
\author{Bernhard Heim}
\address{Department of Mathematics and Computer Science\\Division of Mathematics\\University of Cologne\\ Weyertal 86--90 \\ 50931 Cologne \\Germany}
\address{Lehrstuhl A f\"{u}r Mathematik, RWTH Aachen University, 52056 Aachen, Germany}
\email{bheim@uni-koeln.de}
\author{Markus Neuhauser}
\address{Kutaisi International University, 5/7, Youth Avenue,  Kutaisi, 4600 Georgia}
\address{Lehrstuhl A f\"{u}r Mathematik, RWTH Aachen University, 52056 Aachen, Germany}
\email{markus.neuhauser@kiu.edu.ge}
\subjclass[2020] {Primary 05A17, 11P82; Secondary 05A20}
\keywords{Bessenrodt--Ono inequality, Generating functions, $k$-regular partitions.}
\begin{abstract}
We build upon
the work by
Bessenrodt and Ono, as well as
Beckwith and Bessenrodt concerning
the combined additive and multiplicative
behavior of
the $k$-regular partition functions $p_k(n)$. 
Our focus is on addressing
the solutions of the
Bessenrodt--Ono inequality 
\begin{equation*}
p_k(a) \, p_k(b) > p_k(a+b).
\end{equation*}
We determine the
sets $E_k$ and $F_k$ consisting
of all pairs $(a,b)$, 
where we have equality or the opposite inequality.
Bessenrodt and Ono previously
determined
the exception sets $E_{\infty}$ and $F_{\infty}$ for the partition function $p(n)$. 
We prove by induction that $E_k=E_{\infty}$ and $F_k=F_{\infty}$ if and only if $k \geq 10$. Beckwith and Bessenrodt
used analytic methods to
consider
$2 \leq k \leq 6$,
while Alanazi, Gagola,
and Munagi
studied the case $k=2$
using combinatorial methods.
Finally, we present
a precise and comprehensive conjecture on the log-concavity of the $k$-regular partition function
extending previous speculations
by Craig and Pun. The case $k=2$ was recently proven by Dong and Ji.
\end{abstract}

\maketitle
\section{Introduction}
In this paper we investigate
partition functions \cite{An98, On03} and
provide a complete solution for the Bessenrodt--Ono inequality \cite{BO16} for $k$-regular partition functions $p_k(n)$ \cite{Ha71}. We extend the
previous results for $2 \leq k \leq 6$
by Beckwith and Bessenrodt \cite{BB16} to $k \geq 2$,
proving
that for $k \geq 10$, the solutions
coincide with the solutions 
of the partition function $p(n)$. 

Let $\{\alpha(n)\}_n$ denote a sequence of positive integers. The sequence
satisfies the Bessenrodt--Ono inequality for a pair $(a,b)$ of positive integers if
\begin{equation}\label{BO}
\Delta(a,b):= \alpha(a) \, \alpha(b) - \alpha(a+b)
>0.
\end{equation}
For partitions $\alpha(0)=\alpha(1)=1$ and due to symmetry,
we only consider pairs with $1 < a \leq b$. 
Let $E$ denote the set of pairs
where $\Delta(a,b)=0$ and $F$ the set, where
$\Delta(a,b)<0$. For $k$-regular partition functions,
we use the symbols
$\Delta_k(a,b)$, $E_k$ and $F_k$, where $k \in \{2,3,4, \ldots \} \cup \{\infty\}$.

Let us recall the definition of the $k$-regular partition function $p_k(n)$.
A partition of a positive integer $n$ is a finite nonincreasing sequence of positive integers
that sum
to $n$. These integers are called the parts of the partition. The partition function $p(n)$ is the number of partitions of $n$ (\cite{An98}, Section 1.1). Let $k\geq 2$. The $k$-regular partition function $p_k(n)$ is the number of partitions of $n$ in which none of the parts
is a multiple
of $k$. Note, that $p_k(n)$ is also equal to the number of partitions
with each part
appearing at most $k-1$ times. Therefore, the number
of partitions of $n$ with odd parts is equal to the number of partitions with different parts. Based on the results of this paper and
the fact that $p_k(n) = p(n)$ for $k>n$,
we define
$p_{\infty}(n):=p(n)$.

In 2016,
Bessenrodt and Ono \cite{BO16} discovered the following
property of the partition function $p_{\infty}(n)$.
\begin{theorem}[Bessenrodt, Ono 2016]
Let $1 < a \leq b$. Then the Bessenrodt--Ono inequality
(\ref{BO}) is satisfied for the partition function $p_{\infty}(n)$,
except for the pairs 
\begin{eqnarray*}
E_{\infty} &=& \left\{ \left( 2,6\right) , \left( 2,7\right) , \left( 3,4\right) \right\} ,\\
F_{\infty} &=& \{(2,2), (2,3), (2,4),  (2,5), (3,3),(3,5)\}.
\end{eqnarray*}
\end{theorem}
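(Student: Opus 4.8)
The plan is to reduce the statement to a single one-sided estimate together with a finite computation. Since $\Delta_\infty(a,b)$ is symmetric we may assume $2\le a\le b$, so $a\le (a+b)/2$; the desired inequality $p(a)p(b)>p(a+b)$ is equivalent to
\[
\frac{p(a+b)}{p(b)} \;=\; \prod_{j=b+1}^{a+b}\frac{p(j)}{p(j-1)} \;<\; p(a).
\]
The whole argument turns on the ratios $r(n):=p(n)/p(n-1)$. Two facts drive it: $r(n)\to 1$ as $n\to\infty$, with an explicit rate, and $r(n)$ is eventually decreasing — i.e.\ $p(n)$ is log-concave for $n$ past an explicit bound $N_0$ — which I would cite from the literature (DeSalvo--Pak) or reprove from the Hardy--Ramanujan--Rademacher expansion combined with a finite numerical check. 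Once $r$ is decreasing on $[N_0,\infty)$ one gets, for $b\ge N_0$, the clean bounds $p(a+b)/p(b)\le r(b+1)^a\le r(N_0+1)^a$, and in the balanced range $p(a+b)/p(b)\le r(a+1)^a$ when moreover $a\ge N_0$.

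I would then split $(a,b)$ into three ranges. \emph{(i) $a$ large.} Compare $\log\!\big(p(a+b)/p(b)\big)\le a\log r(a+1)$ with $\log p(a)$. Hardy--Ramanujan gives $\log p(a)=\pi\sqrt{2a/3}-\log(4a\sqrt3)+o(1)$ and $\log r(a+1)=\pi/\sqrt{6a}+O(1/a)$, hence $a\log r(a+1)\sim \pi\sqrt{a/6}=\tfrac12\pi\sqrt{2a/3}$; thus asymptotically $p(a+b)/p(b)$ is controlled by about $\sqrt{p(a)}$, a generous factor-two margin in the exponent. Making the error terms effective (via Lehmer-type bounds) produces an explicit threshold $A_0$ beyond which the inequality holds for all $a\ge A_0$ and all $b\ge a$. \emph{(ii) $a$ bounded, $b$ large.} For each fixed $a$ with $2\le a<A_0$, $p(a)$ is an explicit integer $\ge 2$ while $p(a+b)/p(b)\le r(b+1)^a\to 1$; quantifying how fast $r\to 1$ gives, for each such $a$, an explicit $B_0(a)$ beyond which the inequality holds. (Alternatively, this range can be organised as an induction on $b$: writing $q(n):=p(n)-p(n-1)$, the number of partitions of $n$ into parts $\ge 2$, one has $p(a)p(b)-p(a+b)=\big[p(a)p(b-1)-p(a+b-1)\big]+\big[p(a)q(b)-q(a+b)\big]$, reducing the step to $p(a)q(b)\ge q(a+b)$, which again follows for $b$ large because $q(n)/q(n-1)\to 1$.) \emph{(iii) the residual pairs}, those with $a<A_0$ and $b<B_0(a)$ — a finite, explicitly bounded set — are disposed of by directly evaluating $p$, reading off precisely the pairs where equality or the reverse inequality holds, namely $E_\infty$ and $F_\infty$.

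The real work, and the only real obstacle, is the bookkeeping in (i)--(iii): pinning down constants $N_0$, $A_0$, $B_0(a)$ small enough that the residual check in (iii) is genuinely finite and feasible, and handling the fact that $r(n)$ is \emph{not} monotone for small $n$ (for instance $r(4)=5/3>3/2=r(3)$), so the convenient inequality $r(a+1)\ge r(j)$ for $j\ge a+1$ is only available once $a\ge N_0$ — this is exactly why the ranges $a<A_0$ and $b<B_0(a)$ resist the asymptotic argument and must be swept up computationally. A route avoiding analytic input altogether would replace the log-concavity fact by the pentagonal-number recurrence $p(n)=p(n-1)+p(n-2)-p(n-5)-p(n-7)+\cdots$: truncating after a block of terms of like sign yields elementary two-sided bounds on $p(n)$ in terms of smaller values, hence a usable upper bound for $r(n)$, and the same three-range scheme goes through, presumably at the cost of a somewhat larger finite check.
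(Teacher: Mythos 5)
Your strategy is viable, but note that the paper does not actually prove this statement: it is quoted as a known result of Bessenrodt--Ono \cite{BO16}, and the authors merely survey its three existing proofs (the original analytic one, the combinatorial one of Alanazi--Gagola--Munagi \cite{AGM17}, and the inductive one of \cite{HN22}). What you propose is essentially the first of these --- effective Hardy--Ramanujan--Rademacher/Lehmer estimates plus a finite check --- augmented by the DeSalvo--Pak log-concavity to control the ratios $r(n)=p(n)/p(n-1)$; your central observation that $a\log r(a+1)\sim\tfrac12\pi\sqrt{2a/3}$ is half of $\log p(a)$ is exactly the margin that makes that route work, and your three-range bookkeeping is the standard way to close it. The machinery the paper itself develops (for the $k$-regular generalization in Section 3, following \cite{HN22}) is genuinely different: an induction on $n=a+b$ driven by the convolution recurrence $n\,p_k(n)=\sum_{\ell=1}^{n}g_k(\ell)\,p_k(n-\ell)$, with $\Delta_k(a,b)$ decomposed as $L+R_1+R_2+R_3$, each piece bounded using only $g_k(\ell)\leq \ell(1+\ln \ell)$ and the elementary lower bound $p_k(n)>2^{\sqrt{2n/3+1/4}-3/2}$, plus a computer verification up to an explicit $N_0$. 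Your route buys a transparent asymptotic picture but requires importing effective circle-method constants (and, as you correctly flag, care with the non-monotonicity of $r(n)$ for small $n$); the paper's route avoids the circle method entirely at the cost of a more intricate splitting of the recurrence and a larger finite computation. As written, your text is a proof plan rather than a proof --- the constants $N_0$, $A_0$, $B_0(a)$ are not pinned down --- but there is no conceptual gap, and the exceptional sets $E_\infty$ and $F_\infty$ would indeed fall out of the residual finite check.
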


Bessenrodt and Ono provided
an analytic proof (\cite{BO16}, Section 2) utilizing 
deep results by
Lehmer \cite{Le39} and Rademacher \cite{Ra37} for
the partition function,
built
on the circle method by Hardy and Ramanujan.
A second combinatorial proof was found by Alanazi, Gagola,
and Munagi \cite{AGM17}.
Their sophisticated proof involved
an injective map and an induction.
A third algebraic proof was provided in \cite{HN22}
using induction, and was
also generalized to plane partitions \cite{HNT23}.

Beckwith and Bessenrodt \cite{BB16} demonstrated that
$k$-regular partition functions
for $2 \leq k \leq 6$ also
fulfill the Bessenrodt--Ono inequality
(see Section \ref{regular}
for their result).

Moreover, many more sequences turned out to satisfy (\ref{BO}).
Chern, Fu, and Tang \cite{CFT18} generalized and proved the Bessenrodt--Ono
inequality (BO)
for $k$-colored partitions.
Hou and Jagadeesan \cite{HJ18}
extended on the numbers of partitions with ranks in a given residue class modulo $3$ and Males \cite{Ma20} did so
for general modulus. A
modified BO was presented
in \cite{HN19}.
Dawsey and Masri \cite{DM19} proved BO for the Andrews {\it spt}-function, and Gajdzica
did so for $A$-partitions.
In \cite{HNT20},
the polynomization of the $k$-colored partitions \cite{CFT18} was
initiated, leading to
a continous version of the Chern, Fu, Tang result \cite{CFT18} for
$k$-regular partitions.
The proof is utilizing an Euler-type recursion formula for the involved polynomials and their derivatives.
This polynomization approach and the underlying proof strategy
were also
successfully applied by Li \cite{Li23} for the overpartition function and
by Gajdzica et al.\ (\cite{GHN24}, Theorem 1.3)
for $A$-partition functions.

Recently,
several interesting and important papers
have explored the asymptotic
behavior leading to log-concavity properties and the 
Bessenrodt--Ono inequality,
as well as their relations. We refer to 
Bringmann, Franke,
and Heim (\cite{BFH24}, Theorem 1.3 and Theorem 1.5),
Benfield and Roy (\cite{BR24}, Theorem 1.1) and Gajdzica, Miska,
and Ulas (\cite{GMU24}, Theorem 2.1 and Theorem 3.1).

\section{Results on $k$-regular partitions}\label{regular}

Let $k \geq 2$. The generating function for the number of $k$-regular partitions is given by
\begin{equation*}
\sum_{n=0}^{\infty} p_k(n) \, q^n = \prod_{n=1}^{\infty} \frac{\left(1-q^{kn}\right)}{\left(1 -q^n\right)}.
\end{equation*}
Note that 
\begin{equation*}
\prod_{n=1}^{\infty} \frac{\left(1-q^{2n}\right)}{\left(1 -q^n\right)} =
\prod_{n=1}^{\infty} \left(1 -q^{2n-1}\right)^{-1},
\end{equation*}
which
implies that the number of $2$-regular partitions of $n$ is equal to the number
of partitions of $n$ with odd parts. We also refer to Glaisher's generalization for arbitrary $k$ (\cite{An98}, Corollary 1.3).

Let $g_k(n):= \sigma(n) - k \sigma(n/k)$, where $\sigma(n):= \sum_{d \mid
n} d$ and $\sigma(x)=0$ if $x \not\in \mathbb{N}$. Then
\begin{equation*}
\sum_{n=0}^{\infty} p_{k}\left( n\right) q^{n}= \exp \left( \sum_{n=1}^{\infty} g_k(n) \, \frac{q^n}{n}\right).
\end{equation*}
This follows from the identity $\prod_{n=1}^{\infty} \left(1 -q^n\right) = \exp \left( - \sum_{n=1}^{\infty} \sigma(n) \, q^n/n\right)$
and implies 
\begin{equation*}
n \, p_k(n) = \sum_{\ell =1}^n g_k(\ell) \, p_k(n - \ell),
\label{recurrence}
\end{equation*}
with initial value $p_k(0)=1$.
Beckwith and Bessenrodt proved,
in the same spirit as \cite{BO16},
that the $k$-regular partition function for $2 \leq k \leq 6$ satisfies (\ref{BO}).
They utilized a Rademacher-type formula for $p_k(n)$ given by Hagis \cite{Ha71} and determined for
each $k$ the exception sets $E_k$ and $F_k$.
We recall the following:
Let $a,b,k$ be positive integers with $1<a\leq b$ and let $k \geq 2$.
Then we denote by $E
_k$ the set of all pairs $(a,b)$ with equality in
(\ref{BO}) and $F
_k$ the set of all pairs $(a,b)$ with
\begin{equation*}
p_k(a) \, p_k(b) < p_k(a+b).
\end{equation*}

\begin{table}[H]
\begin{tabular}{|rp{66mm}|p{66mm}|}
\hline
$k$ & Elements of $E_k$
&  Elements of $F_k$
\\ \hline \hline
$2$&$\pmatrix{ 3,3}
$, $\pmatrix{ 3,5}
$, $\pmatrix{ 3,6}
$, $\pmatrix{ 3,7}
$, $\pmatrix{ 3,8}
$, $\pmatrix{ 4,15}
$, $\pmatrix{ 4,16}
$, $\pmatrix{ 4,17}
$, $\pmatrix{ 5,6}
$, $\pmatrix{ 5,7}
$, $\pmatrix{ 5,8}
$&$\left( 2,b\right) $, $b\geq 2$,
$\pmatrix{ 3,4}
$, $\pmatrix{ 4,4}
$, $\pmatrix{ 4,5}
$, $\pmatrix{ 4,6}
$, $\pmatrix{ 4,7}
$, $\pmatrix{ 4,8}
$, $\pmatrix{ 4,9}
$, $\pmatrix{ 4,10}
$, $\pmatrix{ 4,11}
$, $\pmatrix{ 4,12}
$, $\pmatrix{ 4,13}
$, $\pmatrix{ 4,14}
$, $\pmatrix{ 5,5}
$\\
$3$&$\pmatrix{ 2,2}
$, $\pmatrix{ 3,10}
$&$\pmatrix{ 2,3}
$, $\pmatrix{ 3,3}
$, $\pmatrix{ 3,4}
$, $\pmatrix{ 3,5}
$, $\pmatrix{ 3,6}
$, $\pmatrix{ 3,7}
$, $\pmatrix{ 3,8}
$, $\pmatrix{ 3,9}
$, $\pmatrix{ 3,11}
$, $\pmatrix{ 3,13}
$\\
$4$&$\pmatrix{ 2,2}
$, $\pmatrix{ 2,3}
$, $\pmatrix{ 2,5}
$, $\pmatrix{ 3,3}
$, $\pmatrix{ 3,4}
$, $\pmatrix{ 4,4}
$&$\pmatrix{ 2,4}
$\\
$5$&$\pmatrix{ 2,3}
$, $\pmatrix{ 2,4}
$&$\pmatrix{ 2,2}
$, $\pmatrix{ 2,5}
$, $\pmatrix{ 3,3}
$, $\pmatrix{ 3,5}
$\\
$6$&$\pmatrix{ 2,4}
$, $\pmatrix{ 2,5}
$, $\pmatrix{ 2,6}
$&$\pmatrix{ 2,2}
$, $\pmatrix{ 2,3}
$, $\pmatrix{ 3,3}
$\\
\hline
\end{tabular}
\caption{\label{ausnahmetabelle}Exceptions to the Bessenrodt--Ono inequality\\ for $2 \leq k \leq 6$}
\end{table}
\begin{theorem}[Beckwith, Bessenroth 2016]
Let $a,b,k$ be integers with $1<a \leq b$ and $2 \leq k \leq 6$.
Then the Bessenrodt--Ono inequality 
\begin{equation}
\label{pk}p_k(a) \, p_k(b) > p_k(a+b)
\end{equation}
is satisfied for all pairs,
except for the pairs presented
in Table \ref{ausnahmetabelle}
for $E
_k$ and $
F_k$.
Therefore, let $n_k$ and $m_k$ be given
by the following table:
\begin{equation*}
\begin{array}{|c|c|c|c|c|c|}
\hline
k   & 2 &  3 & 4 & 5& 6\\ \hline
n_k & 3 & 2 & 2 &2 &2 \\
m_k & 22 & 17 & 9 & 9 & 9 \\ \hline
\end{array}
\end{equation*}
Then for any $a,b \geq n_k$ and $a+b \geq m_k$, the Bessenrodt--Ono inequality
(\ref{pk}) is valid.
\end{theorem}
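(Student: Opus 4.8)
We sketch a proof; fix $k\in\{2,3,4,5,6\}$. The plan is to reduce the infinitely many pairs $(a,b)$ with $1<a\le b$ to an explicit finite list and then to read off $E_k$ and $F_k$ --- hence $n_k$ and $m_k$ --- by evaluating $p_k(n)$ for the finitely many arguments that remain. Write $\rho_k(n):=p_k(n)/p_k(n-1)$ and $\lambda_k:=\pi\sqrt{\tfrac{2}{3}\left(1-\tfrac{1}{k}\right)}$. From Hagis's Rademacher-type formula \cite{Ha71} one has $\log p_k(n)=\lambda_k\sqrt{n}+O(\log n)$ as well as explicit two-sided bounds $c_k\,n^{-3/4}e^{\lambda_k\sqrt n}\le p_k(n)\le C_k\,n^{-3/4}e^{\lambda_k\sqrt n}$ past a stated point; in particular $\rho_k(n)\to1$, and $p_k(a+b)/p_k(b)=\prod_{j=1}^{a}\rho_k(b+j)\to1$ for each fixed $a$.

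The one analytic ingredient needed is a tail log-concavity statement: there is an explicit $N_k$ with
\[
p_k(n)^2>p_k(n-1)\,p_k(n+1)\qquad(n\ge N_k),
\]
equivalently $\rho_k(n)$ strictly decreasing for $n\ge N_k$, together with $\rho_k(n)>1$ for $n$ past a small bound. One proves this as in \cite{BO16,BB16}: insert Hagis's exact series, bound its tail rigorously, verify the inequality for $n\ge N_k$, and dispose of the smaller $n$ by direct computation; alternatively one may induct on the recursion $n\,p_k(n)=\sum_{\ell\ge1}g_k(\ell)\,p_k(n-\ell)$ from Section~\ref{regular}.

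Granting this, the reduction turns on a single propagation step: \emph{for fixed $a\ge1$, if $b\ge N_k$ and $\Delta_k(a,b)\ge0$, then $\Delta_k(a,b+1)>0$.} Indeed $a+b\ge b\ge N_k$ forces $\rho_k(a+b+1)\le\rho_k(b+1)$, strictly for $a\ge1$, whence
\[
p_k(a+b+1)\le\rho_k(b+1)\,p_k(a+b)\le\rho_k(b+1)\,p_k(a)\,p_k(b)=p_k(a)\,p_k(b+1).
\]
Consequently, for each $a$ with $p_k(a)\ge2$ the index $B_k(a):=\min\{b\ge\max(a,N_k):\Delta_k(a,b)\ge0\}$ is finite, since $\Delta_k(a,b)=p_k(b)\bigl(p_k(a)-p_k(a+b)/p_k(b)\bigr)\to+\infty$, and then $\Delta_k(a,b)>0$ for every $b>B_k(a)$; the two-sided bounds above make $B_k(a)$ explicit. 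The only $a\ge2$ with $p_k(a)=1$ for some $k\in\{2,\dots,6\}$ is $a=2$, $k=2$, and there $\Delta_2(2,b)=p_2(b)-p_2(b+2)<0$ for every $b\ge2$ (the number of partitions of $b$ into odd parts strictly increases in steps of two), which yields the family $(2,b)\in F_2$. Finally $2\log p_k(a)-\log p_k(2a)=(2-\sqrt2)\lambda_k\sqrt a+O(\log a)\to+\infty$, so there is an explicit $A_k\ge N_k$ with $\Delta_k(a,a)>0$ for all $a\ge A_k$, and the propagation step then gives $\Delta_k(a,b)>0$ for all $b\ge a\ge A_k$.

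What remains is a finite verification. Apart from the family $(2,b)$, the undecided pairs are exactly those with $2\le a<A_k$ and $a\le b\le B_k(a)$, and the largest value of $a+b$ among them is small; computing $p_k(n)$ up to that bound, one records the pairs with $\Delta_k(a,b)=0$ as $E_k$ and those with $\Delta_k(a,b)<0$ as $F_k$, obtaining Table~\ref{ausnahmetabelle}. Then $n_k=2$ for $k\in\{3,4,5,6\}$ while $n_2=3$ (only for $k=2$ does a first coordinate support infinitely many exceptions), and $m_k:=1+\max\{a+b:(a,b)\in E_k\cup F_k,\ a\ge n_k\}$; one checks these agree with the tabulated $n_k,m_k$ and that indeed $\Delta_k(a,b)>0$ whenever $a,b\ge n_k$ and $a+b\ge m_k$. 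The main obstacle is the first step: establishing the tail log-concavity of $p_k$ (with sharp enough two-sided bounds) and keeping $N_k$ small enough that this residual finite check remains of modest size --- that is where the genuine analysis lies, namely uniform control of the error term in Hagis's expansion in the style of \cite{BO16,BB16}; everything afterwards is the propagation lemma plus a bounded computation.
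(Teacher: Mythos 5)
This theorem is not proved in the paper at all: it is imported verbatim from Beckwith--Bessenrodt \cite{BB16}, and the paper's own machinery (the inductive argument of Section~3, following \cite{HN22}) only re-derives the cases $4\leq k\leq 6$ as part of Theorem~\ref{Hauptresultat}, with the Remark indicating how to adapt it to $k\in\{2,3\}$. So the relevant comparison is threefold. Your route differs from both: \cite{BB16} works directly with explicit two-sided bounds of the form $c_k n^{-\alpha}e^{\lambda_k\sqrt n}\leq p_k(n)\leq C_k n^{-\alpha}e^{\lambda_k\sqrt n}$ extracted from Hagis's formula and compares $e^{\lambda_k(\sqrt a+\sqrt b)}$ against $e^{\lambda_k\sqrt{a+b}}$ via $\sqrt a+\sqrt b\geq\sqrt{a+b+2\min(a,b)}$, which yields a single explicit threshold on $a+b$ with no log-concavity input; the paper instead inducts on the recurrence $n\,p_k(n)=\sum_\ell g_k(\ell)p_k(n-\ell)$ using only the elementary bound (\ref{easy}). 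Your propagation-via-log-concavity scheme is internally coherent (the step $\rho_k(a+b+1)<\rho_k(b+1)$ correctly converts $\Delta_k(a,b)\geq 0$ into $\Delta_k(a,b+1)>0$, and the treatment of the $(2,b)$, $k=2$ family and of $m_k$ as $1+\max\{a+b\}$ over exceptions matches the table), but it routes the whole proof through a strictly stronger and harder ingredient than either existing argument needs.

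That is also where the genuine gap sits. Eventual log-concavity of $p_k(n)$ with an \emph{explicit, small} $N_k$ is a nontrivial theorem in its own right --- the paper's Section~4 treats precisely this question as the subject of ongoing work (Craig--Pun \cite{CP20} prove it only for ``sufficiently large $n$ dependent on $k$'', and the $k=2$ case was settled only recently) --- so ``one proves this as in \cite{BO16,BB16}'' is not a proof, and the alternative suggestion to ``induct on the recursion'' is not carried out either. Without an explicit $N_k$, and without explicit constants $c_k,C_k$ sharp enough to pin down $B_k(a)$ and $A_k$, the ``bounded computation'' that is supposed to produce Table~\ref{ausnahmetabelle} cannot actually be performed; since the entire content of the theorem is the exact finite lists $E_k,F_k$ and the thresholds $n_k,m_k$, this is not a deferrable detail. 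A smaller but real slip: the two-sided bounds you state only give $\limsup_b p_k(a+b)/p_k(b)\leq C_k/c_k$, not $\to 1$; to conclude $\Delta_k(a,b)\to+\infty$ when $p_k(a)=2$ you need either the genuine asymptotic equivalence from Hagis's series (with controlled error) or $C_k/c_k<2$, neither of which you establish. The fix is simply to drop the log-concavity detour and argue as in \cite{BB16}, or to adopt the paper's elementary induction with the lower bound (\ref{easy}) and the adjusted parameters $A,B,N_0$ given in the Remark.
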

Further, Alanzi, Gagola, and Munagi \cite{AGM17} demonstrated that their
combinatorial proof of the Bessenrodt--Ono inequality can,
in principle,
be
adapted to the $k$-regular case. They
provided a proof for $k=2$ and 
indicated that the number of cases
increase
rapidly for larger $k$.

We extend the table of Beckwith and Bessenrodt for $
E_k$ and $
F_k$ to
$7 \leq k \leq 10$. This leads to
Table \ref{ausnahmetabelle7-10}.

\begin{table}[H]
\begin{equation*}
\begin{array}{|r|l|l|}
\hline
k & \text{Elements of }E
_k & \text{Elements of } F
_k \\ \hline
7 & (2,5), (2,7), (3,5) & (2,2), (2,3), (2,4), (3,3) \\
8 & (3,4), (3,5) & (2,2), (2,3), (2,4), (2,5), (3,3)\\
9 & (2,6), (3,4) & (2,2), (2,3), (2,4), (2,5), (3,3), (3,5) \\
10 & (2,6), (2,7), (3,4) & (2,2), (2,3), (2,4), (2,5), (3,3), (3,5)\\
\hline
\end{array}
\end{equation*}
\caption{\label{ausnahmetabelle7-10}
Exceptions to the Bessenrodt--Ono inequality for $7\leq k\leq 10$}
\end{table}

We observe that $
E_{
k}=E
_{\infty}$ and 
$F_{
k}=F
_{\infty}$ for $k=10$
and prove that this
even holds
true for all $k \geq 10$.

\begin{theorem}\label{Hauptresultat}
Let $a,b,k \geq 2$ be integers,
and $p_k(n)$ be
the $k$-regular partition function.
Then the Bessenrodt--Ono inequality
\begin{equation*}
p_k(a) \, p_k(b) > p_k(a+b) 
\end{equation*}
holds true for all pairs $(a,b)$ except for pairs in $
E_k$, where equality is in place and $F
_k$, where $p_k(a) \, p_k(b)< p_k(a+b)$. For $2 \leq k \leq 10$,
the sets $
E_k$ and $F
_k$ are given by
Tables \ref{ausnahmetabelle}
and \ref{ausnahmetabelle7-10}.
For all $k \geq 10$ we have 
\begin{equation*}
E_k = E
_{\infty} \text{ and }
F_k = F
_{\infty}.
\end{equation*}
\end{theorem}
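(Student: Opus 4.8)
We sketch the strategy.

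\textbf{Step 1: reduction to the range $b\ge k$.} Since $p_k(n)=p(n)$ for $n\le k-1$ and $p_k(n)\le p(n)$ for all $n$, a pair $(a,b)$ with $a+b\le k-1$ satisfies $\Delta_k(a,b)=\Delta_\infty(a,b)$, while a pair with $a,b\le k-1$ and $a+b\ge k$ satisfies $\Delta_k(a,b)\ge\Delta_\infty(a,b)$. Every pair of $E_\infty\cup F_\infty$ has $a+b\le 9$, hence lies in the first of these ranges once $k\ge 10$; so for $k\ge 10$ the theorem of Bessenrodt and Ono shows that the only exceptions among pairs with $a+b\le k-1$ are those of $E_\infty$ and $F_\infty$, and the pairs with $a,b\le k-1$ and $a+b\ge k\ (\ge 10)$ satisfy $\Delta_k(a,b)\ge\Delta_\infty(a,b)>0$. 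Thus for $k\ge 10$ the theorem reduces to the strict inequality
\[
\Delta_k(a,b):=p_k(a)\,p_k(b)-p_k(a+b)>0\qquad(2\le a\le b,\ b\ge k).
\]
For $7\le k\le 9$ the same reduction leaves, in addition, the finitely many pairs with $b\ge k$ and $a+b\le 9$; these, together with the entries of Table~\ref{ausnahmetabelle7-10}, are confirmed by direct computation, and the failure of $E_k=E_\infty$ or $F_k=F_\infty$ for $2\le k\le 9$ is immediate from Tables~\ref{ausnahmetabelle} and~\ref{ausnahmetabelle7-10}. (For $2\le k\le 6$ we invoke Beckwith and Bessenrodt.)

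\textbf{Step 2: the recursive identity and the induction.} Applying $n\,p_k(n)=\sum_{\ell=1}^{n}g_k(\ell)\,p_k(n-\ell)$ to $p_k(b)$, to $p_k(a)$ and to $p_k(a+b)$ and rearranging (recall $g_k(\ell)\ge 1$, and set $\Delta_k(a,0)=0$) yields
\[
(a+b)\,\Delta_k(a,b)=\sum_{\ell=1}^{b}g_k(\ell)\,\Delta_k(a,b-\ell)+\sum_{m=1}^{a}p_k(a-m)\bigl(g_k(m)\,p_k(b)-g_k(b+m)\bigr).
\]
I would prove $\Delta_k(a,b)>0$ for $k\ge 7$, $2\le a\le b$, $b\ge k$ and $a+b\ge 10$ by induction on $n=a+b$, carrying along an explicit positive lower bound $\Delta_k(a,b)\ge\Phi_k(a,b)$. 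In the first sum, the induction hypothesis and Step~1 give $\Delta_k(a,b-\ell)>0$ for all $\ell$ with $b-\ell\ge 2$ save for the boundedly many $\ell$ for which $(a,b-\ell)$ is exceptional, while the terms $\ell=b$ and $\ell=b-1$ contribute $0$ and $g_k(b-1)\bigl(p_k(a)-p_k(a+1)\bigr)\le 0$. In the second sum, $\sum_{m=1}^{a}g_k(m)p_k(a-m)=a\,p_k(a)$, so it equals $a\,p_k(a)p_k(b)-\sum_{m=1}^{a}p_k(a-m)g_k(b+m)$; using $g_k(b+m)\le\sigma(b+m)\le(b+m)\bigl(1+\ln(b+m)\bigr)$ one bounds the subtracted term by $p_k(a)$ times a quantity of polynomial size in $b$. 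When $a<b$ (in particular when $a$ is small and fixed, where one inducts on $b$ alone) the term $a\,p_k(a)p_k(b)$ dominates all negative contributions once $b$ exceeds an absolute constant; when $a$ is close to $b$ (the delicate case), one instead plays $g_k(1)\Delta_k(a,b-1)=\Delta_k(a,b-1)\ge\Phi_k(a,b-1)$ from the first sum against the single negative term $g_k(b-1)\bigl(p_k(a+1)-p_k(a)\bigr)\le\sigma(b-1)\,p_k(a+1)$, which is what forces the choice of $\Phi_k$. The finitely many triples $(k,a,b)$ with $b\ge k$ and $a+b$ below the threshold --- here $k$ itself is bounded --- are the base of the induction and are checked numerically.

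\textbf{Main obstacle.} The crux is the near-diagonal regime $a\approx b$, precisely where the Bessenrodt--Ono inequality is tightest (as the many diagonal exceptions $(2,2),(3,3),(4,4),\dots$ for small $k$ show). There one cannot invoke log-concavity of $p_k$ --- which would make the reduction $\Delta_k(a,b)\ge\Delta_k(a-1,b+1)$ automatic --- because that is exactly the conjecture formulated below and is open for $k\ge 3$. Everything then turns on producing a lower bound $\Phi_k$ that is simultaneously true, derivable from the recursion, and strong enough to survive the induction; the auxiliary facts available are the monotonicity $p_k(n+1)\ge p_k(n)$ and, in the transition range $k\le b<2k$, the pentagonal-number identity $p_k(b)=\sum_{m\in\Z}(-1)^m\,p\bigl(b-k\,\tfrac{m(3m-1)}{2}\bigr)=p(b)-p(b-k)$, which anchors $p_k$ to $p$. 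The real work is to keep the induction threshold absolute in $k$, so that the base cases form a fixed finite set rather than a family growing with $k$.
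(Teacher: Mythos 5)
Your strategy---induction on $a+b$ via the logarithmic-derivative recurrence $n\,p_k(n)=\sum_{\ell=1}^{n}g_k(\ell)\,p_k(n-\ell)$, splitting the resulting identity into a part controlled by the induction hypothesis, a dominant positive main term, and polynomially bounded negative terms, followed by a finite computer verification of the base cases---is exactly the route the paper takes (there the identity is normalized as $\Delta_k=L+R$ with the recursion applied to the larger argument, but the grouping is the same up to relabelling), and both your Step~1 reduction and your observation that $b\ge k$ together with $a+b\le N_0$ forces $k\le N_0$ are sound. The genuine gap is the one you flag and then leave open: to make ``once $b$ exceeds an absolute constant'' literally absolute in $k$, the positive main term must beat negative contributions of size roughly $b\,p_k(b)\bigl(1+\ln(a+b)\bigr)$ times a polynomial in the smaller argument, and this requires a superpolynomial lower bound on $p_k$ at the smaller argument that is \emph{uniform in $k$}. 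The tools you list do not deliver it: monotonicity gives nothing quantitative, and the pentagonal identity $p_k(b)=p(b)-p(b-k)$ holds only for $b<2k$ and degenerates into an alternating sum with no obvious sign control beyond that range. The paper's one genuinely new lemma is precisely this bound, $p_k(n)>2^{\sqrt{2n/3+1/4}-3/2}$ for every $k\ge2$, proved combinatorially by counting subpartitions built from small parts not divisible by $k$, each taken with multiplicity at most $3$. Without it (or an equivalent), your induction threshold a priori grows with $k$, the base-case set is no longer finite, and your unspecified $\Phi_k$---which is carrying all the remaining quantitative weight---cannot be chosen, so the induction does not close as written.

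A smaller point: your ``main obstacle'' paragraph misidentifies the hard case. Once the uniform lower bound is available, the near-diagonal regime is not delicate---the paper's final estimate (\ref{final}) depends only on the smaller of $a,b$ and is positive once that variable is at least $1470$, with everything else absorbed into the finite check $a+b\le N_0=2938$; no log-concavity and no special treatment of $a\approx b$ is needed. The single negative term $g_k(b-1)\bigl(p_k(a)-p_k(a+1)\bigr)$ you isolate is likewise swallowed by the same crude bound $g_k(m)\le m(1+\ln m)$.
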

\section{Proof of Theorem \ref{Hauptresultat}}
We closely follow
the proof scheme (\cite{HN22}, Section $2$)
proposed to prove
Bessenrodt--Ono inequalities. 

First, we estimate the arithmetic function $g_k(n)$ and the $k$-regular partition 
function $p_k(n)$ with the following upper and lower bounds,
independent of $k$.
\begin{lemma}
Let $k \geq 2$. Then
\begin{eqnarray}
g_{k}(n) & \leq & n \,  \big( 1 + \func{ln}(n) \big) \nonumber
, \\
p_k(n) &
>&
2^{
\sqrt{2n/3+1/4}-
3/2
}.
\label{easy}
\end{eqnarray}
\end{lemma}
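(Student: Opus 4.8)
The plan is to establish the two inequalities separately; both are elementary and uniform in $k$.

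\textbf{The bound on $g_k(n)$.} Since $\sigma(x)\geq 0$ for every $x$, the subtracted term $k\,\sigma(n/k)$ is nonnegative, so $g_k(n)\leq\sigma(n)$ for all $k\geq 2$. Thus it suffices to bound $\sigma(n)$ alone. Writing $\sigma(n)=\sum_{d\mid n}d=n\sum_{d\mid n}\frac1d$ and enlarging the index set to all $1\leq d\leq n$ gives $\sigma(n)\leq n H_n$ with $H_n:=\sum_{j=1}^n\frac1j$. Comparing the harmonic sum with $\int_1^n\frac{dx}{x}$ (using $\frac1j\leq\int_{j-1}^j\frac{dx}{x}$ for $j\geq 2$) yields $H_n\leq 1+\ln n$, whence $g_k(n)\leq\sigma(n)\leq n\bigl(1+\ln n\bigr)$. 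This is the easy half.

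\textbf{The lower bound on $p_k(n)$.} I would use that $p_k(n)$ equals the number of partitions of $n$ in which each part occurs at most $k-1$ times; for $k\geq 2$ this is at least the number $q(n)$ of partitions of $n$ into pairwise distinct parts (multiplicity at most one). So it is enough to bound $q(n)$ from below. Let $m$ be the largest integer with $\binom{m+1}{2}=\frac{m(m+1)}{2}\leq n$; note $m\geq 1$ for $n\geq 1$. The idea is to inject the $2^{m-1}$ subsets $S\subseteq\{1,2,\dots,m-1\}$ into the distinct-part partitions of $n$ by sending $S$ to the partition whose parts are the elements of $S$ together with the single extra part $n-\sum_{s\in S}s$. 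Since $\sum_{s\in S}s\leq 1+\cdots+(m-1)=\binom{m}{2}$ and $n\geq\binom{m+1}{2}=\binom{m}{2}+m$, the extra part is at least $m$, hence strictly larger than every element of $S$: the resulting partition is legitimate (distinct parts), and the map is injective because the extra part is recovered as the unique largest part. Therefore $p_k(n)\geq q(n)\geq 2^{m-1}$.

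\textbf{Matching the constants.} It remains to check $m-1>\sqrt{2n/3+1/4}-3/2$, which is exactly what turns $2^{m-1}$ into the claimed bound $2^{\sqrt{2n/3+1/4}-3/2}$. By maximality of $m$ we have $(m+1)(m+2)>2n$, so $\tfrac{2n}{3}<\tfrac{m^2+3m+2}{3}\leq m^2+m$, the last step being the inequality $2m^2\geq 2$, valid for all $m\geq 1$. Adding $\tfrac14$ and completing the square gives $(m+\tfrac12)^2>\tfrac{2n}{3}+\tfrac14$, hence $m+\tfrac12>\sqrt{2n/3+1/4}$, which is the desired inequality after subtracting $\tfrac32$. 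The only point needing a little care is this chain of constants together with the small-$n$ boundary case $m=1$, which is already covered since $2m^2\geq 2$ holds at $m=1$, so no instance of the Lemma is lost; and since every estimate above is independent of $k$, both bounds follow uniformly.
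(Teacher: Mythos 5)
Your proof is correct, and the first inequality is handled exactly as in the paper: $g_k(n)\le\sigma(n)$ because $k\,\sigma(n/k)\ge 0$, followed by comparison of the harmonic sum with $\int_1^n \frac{dx}{x}$. For the lower bound on $p_k(n)$ you take a genuinely different route. The paper counts submultisets of $\left\{1,\dots,\left\lfloor\sqrt{2n/3+1/4}-1/2\right\rfloor\right\}\setminus k\,\mathbb{N}$ with multiplicities at most $3$, which yields an intermediate, $k$-dependent bound of the form $4^{(k-1)\left(\sqrt{2n/3+1/4}-3/2\right)/k}$ and only then specializes (at the worst case $k=2$) to the stated $2^{\sqrt{2n/3+1/4}-3/2}$. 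You instead pass immediately to the distinct-parts function via $p_k(n)\ge q(n)$ (each part occurring at most $k-1\ge 1$ times) and bound $q(n)\ge 2^{m-1}$ by the explicit injection $S\subseteq\{1,\dots,m-1\}\mapsto S\cup\left\{n-\sum_{s\in S}s\right\}$, with the completing part forced to be at least $m$ so that distinctness and injectivity are automatic. Your version is uniform in $k$ from the outset and makes explicit the step the paper's sketch leaves implicit, namely how an admissible collection of small parts is completed to a partition of exactly $n$; the price is that you give up the factor $(k-1)/k$ in the exponent, so the paper's intermediate estimate is stronger for large $k$, although both arguments deliver the same final bound, which is all the induction in Section 3 requires. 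Your closing computation, deducing $m+\tfrac12>\sqrt{2n/3+1/4}$ from the maximality condition $(m+1)(m+2)>2n$ together with $2m^2\ge 2$, is correct, including the boundary case $m=1$.
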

\begin{proof}

The upper bound for $g_k(n) \leq \sigma(n)$ follows easily by integral comparison with the
bound for $n>1$. The lower bound of $p_k(n)$ cannot
be directly
related to $p(n)$ as
given in (\cite{HN22}, equation (2.2)), and
requires some
additional investigation.

The lower bound of $p_k(n)$ follows from 
\begin{equation*}
p_{k}\left( n\right) \geq
4^{
\left\lfloor
\sqrt{2n/3+1/2}-1
/2\right\rfloor
-\left\lfloor \left\lfloor
\sqrt{2n/3+1/4}-1
/2
\right\rfloor /k\right\rfloor }
>4
^{\left( k-1\right) \left(
\sqrt{2n/3+1/4}-
3/2
\right) /k}
. \end{equation*}
Here $\lfloor x \rfloor$ denotes the usual floor symbol.
Consider partitions of $n$ using only parts
from
$\left\{ 1,2,\ldots ,\left\lfloor \sqrt{n/3}\right\rfloor \right\} \setminus k \,\, \mathbb{N}$
with multiplicity at most $3$.
To
any admissible
submultiset, we can assign a partition.
The largest value we obtain is
\begin{eqnarray*}
3\sum _{m=1}^{\left\lfloor
\sqrt{2
n/3+1/4}-1
/2\right\rfloor }m&\leq &
\frac{3}{2}\left(
\sqrt{2
n/3+1/4}-1
/2+1\right) \left( \sqrt{2n/3+1/4}-1/2\right)
\\
&=&3\frac{2n/3+1/4-1/4}{2}=n
.
\end{eqnarray*}
Also, we have removed
$\left\lfloor \frac{1}{k}\left\lfloor \sqrt{\frac{2n}{3}+\frac{1}{4}}-\frac{3}{2}\right\rfloor \right\rfloor $
elements from the set.
This
demonstrates the claim.
\end{proof}

\begin{remark}
Note that the proof shows that we could even choose as a lower bound
$$p_k(n)
\geq
2
^{\left\lfloor
\sqrt{2n/3+1/4}-1/2\right\rfloor
}.$$
\end{remark}

\subsection{Proof of Theorem \ref{Hauptresultat} for $k>3$}

Let $A:=2$ and $B:=10$. Let $n \geq B$. We say the statement $S(n)$
holds true if for all partitions
$n=a +b$ with $a,b \geq A$:
\begin{equation*}\label{pab}
\Delta_k(a,b
):= p_{k}(a) \, p_{k}(b) - p_{k}(a+b)>0.
\end{equation*}

We assume that
$n >N_0>1$ and $S(m)$ hold
true for all $ B \leq m \leq n-1$. For $ B \leq
m\leq N_0$,
we demonstrate
$S(m
)$ through
a direct computer calculation with PARI/GP.
It will turn out that we can choose $N_{0}=
2938$.

We
obtain the following expressions
for $\Delta
_{k}\left(a,b
\right) =L+R$:
\begin{eqnarray*}\label{startl}
L &:=& -
\sum_{
\ell =1}^{b}
\frac{g _{k}\left(
\ell +a \right)}{a+b}
p_{k}\left(b-
\ell \right) ,\\
R &:=&
\sum _{
\ell =1}^{a}
\Big(
\frac{g_{k}(
\ell )}{a}\,
p_{k}(a-
\ell ) \, 
p_{k}(b) -\frac{g_{k}(
\ell )}{a+b} \,
p_{k}(a+b-
\ell )\Big) .
\label{startr}
\end{eqnarray*}
Further, we will refine the right sum $R$ into 
$$R=R_1+R_2+R_3.$$
\subsubsection{Left sum $L$}

We have
$
L
> -b \, \, p_{k}\left( b \right) \, 
\big( 1 + \ln (a+b)\big)  .
$

\subsubsection{Right sum $R
$}

Let 
\begin{equation*}
\ell _0:= a-\max \left\{ B -b,  A \right\} + 1. 
\end{equation*}
Therefore, $\ell
_0=
a-\max \left\{ 9-b, 1 \right\}$.
Let
\begin{eqnarray*}\label{decomposition}
R_{1}&:=&\sum_{\ell
=1}^{1}  \,f_{
\ell }(a,b)       , \,\, 
R_{2}:=\sum_{
\ell =2}^{\ell
_0-1} \,f_{
\ell }(a,b) , \,\, 
R_{3}:=\sum_{
\ell =\ell
_0}^{a} \,f_{
\ell }(a,b) ,
\\
\label{fkab}
f_{\ell
}(a,b)&:=&
\frac{g_{k}(
\ell )}{a}\,
p_{k}(a-
\ell ) \, 
p_{k}(b) -\frac{g_{k}(
\ell )}{a+b} \,
p_{k}(a+b-
\ell ) .
\end{eqnarray*}
\paragraph{The sum $R_{1}$}

We obtain
the lower bound:
\begin{equation*}
R_{1}
>
 \frac{b}{2a^{2}}\, p_{k}\left( a-1\right) p_{k}\left( b\right) .
\end{equation*}
\paragraph{The sum $R_{2
}$}
The second sum:
$R_2 > 0$.
\paragraph{The sum $R_{3}$} \ \newline
We split the third sum again into three parts: $R_3= R_{
31}+R_{
32}+R_{
33}$, where
\begin{eqnarray}
R_{31}:=
\sum_{
\ell =\ell
_0}^{a-A}   \, f_{
\ell }(a,b)
& > &
5 \, \left( 4 - p_{\infty }\left( 8\right) \right) \, \left(1+\ln \left( a\right) \right)  \label{a-1},
\label{big}\\
R_{32}:=\sum
_{\ell
=a-A+1}^{a-1}    \, f_{\ell
}(a,b)
& > &
- p_{k}\left( b\right)  \, \big( 1 + \ln \left( a\right) \big) ,\nonumber
\\
R_{33}:= \sum_{
\ell =a}^{a}  \, f_{\ell
}(a,b)  & \geq & 0 .\nonumber
\end{eqnarray}
We first use that
$p_{k}\left( 9-b\right) p_{k}\left( b\right) \geq p_{k}\left( 9\right) $,
which leads to at most $5$ summands.
The
estimation in (\ref{big}) follows from $p_{k}(a-
\ell ) \geq p_{k}(2)$. 
For the
estimation, we use that
\begin{equation*}
p_{k}\left( b \right) \geq p_{k} \left( 2 \right) = 2, \,\,
\frac{1}{a+b}<\frac{1}{a}, \text{ and } p_{k}(a+b-
\ell ) \leq p_{k}\left( 8\right) \leq p_{\infty }\left( 8\right) .
\end{equation*}
We obtain
\begin{equation*}
R_3 > 
-23 \, b \,\, p_{k}\left( b\right) \left(
1+\ln \left( a+b \right) \right).
\end{equation*}
\subsubsection{Final step}
Putting everything together leads to:
\begin{equation}
\Delta _{k}\left( a,b
\right)
>
\frac{ b \, p_{k} \left( b \right)}{ 2 \, a^2}  
\left( -48 \, a^2 \, \left( 1+\ln \left( 2a\right) \right) +
2^{\sqrt{2\left( a-1\right)
/3+1/4}-3/2}\right). \label{final}
\end{equation}
As the final step, we used the property (\ref{easy}). For $a\rightarrow \infty $
we can immediately observe that this is positive
since the second
term grows
in~$a$
faster than
$a\mapsto a^{2}\left( 1+\ln \left( 2a\right) \right) $.
In fact,
the expression (\ref{final}) is positive for all $ a \geq 1470
$. Note that if $a<1470
$, then $a+b\leq 2a
\leq N_0$.

Note that for $k
>N_{0}$, we have
$p_{k}\left( n\right) =p_{\infty}  \left( n\right) $ for
all $n\leq N_{0}
$. Therefore, we only have
to check
the inequality for
$k\leq N_{0}
$ and $n\leq
N_{0}$.
Hence,
$
\Delta _{k}\left( a,b
\right) >0$, which proves the Theorem.

\begin{remark}
Note that we can also prove the cases
$k\in \left\{ 2,3\right\} $ directly with
our method from \cite{HN22},
but then more adjustments are necessary as we cannot
directly use
(\cite{HN22}, (2.8)) anymore. For $k=2$,
we can choose $A=3$ and
$B=22$ with $N_{0}=3662$,
and for $k=3$,
we can choose $A=2$ and $B=17$
with $N_{0}=3776$. The crucial step is to
adjust the estimate of $R_{31}$
in (\ref{big}) for these
cases.
\end{remark}
\section{Log-concavity}
In this final
section, we provide further evidence, that the $k$-regular partition
function and the partition function $p_{\infty}(n)$ share
similar properties up to small $k$s.
We indicate
that,
as in the case of the Bessenrodt--Ono inequality,
this may also apply to
the log-concave property. 
Nicolas \cite{Ni78} and DeSalvo and Pak \cite{DP15} proved that the partition
function $p_{\infty}(n)$ is log-concave at $n$:
\begin{equation*}
p(n)_{\infty}^2 \geq p_{\infty}(n-1) \, p_{\infty}(n+1)
\end{equation*} 
if and only if $n \geq 25$ or $n$ is even. For $k$-regular partitions,
building on Hagis' Rademacher-type formula and a new method developed by
Griffin, Ono, Rolen,
and Zagier \cite{GORZ19},
Craig and Pun \cite{CP20} proved that the $k$-regular partition functions $p_k(n)$ satisfy higher-order Tur\'an inequalities for sufficiently large $n$ dependent on $k$. Let $N_k$ be the smallest number, such that $\{p_k(n)\}_n$ is log-concave
at $n$
\begin{equation*}
p_k(n)^2 \geq p_k(n-1) \, p_{k}\left( n+1\right)
\end{equation*}
for all $n \geq N_k$. We calculated all exceptions for $2 \leq k \leq 100$ and
$1 \leq n \leq 1000$. For $2 \leq k \leq 20$,
we have provided them
in Table \ref{ex}, where for $k=3$, the
exceptions are all
odd $1
\leq n \leq 57$. This is compatible with the numbers conjectured by Craig and Pun (\cite{CP21}, Remark 1.3) for $2 \leq k \leq 4$.
Recently, Dong and Ji \cite{DJ24} proved the conjecture for $k=2$.

Similar to the Bessenrodt--Ono inequality, we believe
that the set
of exceptions stabilizes
for large $k$
and is the same as the one of $p_{\infty}(n)$.
This set is given by all odd numbers $1 \leq n \leq 25$. We put $N_{\infty}:= 26$.
\begin{conj}
Let $k \geq 2$. Let $N_k$ be the smallest integer, such that $\{p_k(n)\}_n$ is log-concave
at every
$n \geq N_k$. Then
$N_k = N_{\infty}$ if $k\geq 30$.
\end{conj}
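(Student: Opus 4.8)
The plan is to transplant the scheme of Section~3 to the second-difference quantity. Fix $k\ge 30$ and set
\[
\Lambda_k(n):=p_k(n)^2-p_k(n-1)\,p_k(n+1),
\]
so that log-concavity at $n$ means $\Lambda_k(n)\ge 0$; equivalently, with $\rho_k(n):=p_k(n)/p_k(n-1)$, it is the statement that $\rho_k(n+1)\le\rho_k(n)$, i.e.\ that the ratio sequence is nonincreasing from $n=26$ onwards. I would argue in three stages. First, an induction on $n$, valid for every $k\ge 2$, showing that $\Lambda_k(n)>0$ as soon as $n$ exceeds an explicit absolute bound $N_0$. Second, a finite PARI/GP verification that $\Lambda_k(n)\ge 0$ for $26\le n\le N_0$; since $p_k(n)=p_\infty(n)$ whenever $k>n$, for all but finitely many $k$ this range coincides with the already known situation for $p_\infty$, so the direct check is only needed for finitely many $k$. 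Third, the observation that $p_k(24)=p_\infty(24)$, $p_k(25)=p_\infty(25)$, $p_k(26)=p_\infty(26)$ for $k>26$, so that $n=25$ remains an exception and $N_k$ cannot be smaller than $26$. Combining the three stages yields $N_k=N_\infty=26$ for every $k\ge 30$.

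For the inductive step I would feed the Euler-type recursion $n\,p_k(n)=\sum_{\ell=1}^{n}g_k(\ell)\,p_k(n-\ell)$, used at $n$ and at $n+1$, into $\Lambda_k(n)$ to obtain an exact identity of the shape $n(n+1)\,\Lambda_k(n)=M+E$. The $\ell=1$ contribution is already $g_k(1)\,p_k(n)\,p_k(n-1)=p_k(n)\,p_k(n-1)>0$, and the small-$\ell$ block $M$ should stay positive once the inductive hypothesis (that $\rho_k$ is nonincreasing on $[26,n]$) is used to control the ratios by $p_k(n-\ell)/p_k(n-1)\le\rho_k(n-1)^{-(\ell-1)}$ for $\ell\le n-25$; after dividing back by $n(n+1)$ it contributes an amount of order $p_k(n)^2/n^2$. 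The remainder $E$ gathers the large-$\ell$ terms together with the boundary term $-n\,g_k(n+1)\,p_k(n-1)$, and would be bounded from below using precisely the two estimates of the Lemma, $g_k(\ell)\le\ell\,(1+\ln\ell)$ from above and $p_k(m)>2^{\sqrt{2m/3+1/4}-3/2}$ from below, together with the trivial $p_k(m)\le p_\infty(m)$. The aim is an inequality parallel to~(\ref{final}), of the form
\[
\Lambda_k(n)>\frac{p_k(n)^2}{c\,n^2}\Bigl(2^{\sqrt{2(n-1)/3+1/4}-3/2}-c'\,n^2\,(1+\ln n)\Bigr)
\]
with absolute constants $c,c'$; its right-hand side is positive once $n$ passes an explicit threshold, and one takes $N_0$ at least that large. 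Since the exceptions have already been computed for $1\le n\le 1000$, the numerics for the base case are essentially in place and can be pushed further should the threshold demand it.

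The hardest part is constructing and estimating the splitting $n(n+1)\,\Lambda_k(n)=M+E$ in such a way that $M$ genuinely survives as a positive main term. A second difference is much more brittle than the Bessenrodt--Ono product $p_k(a)\,p_k(b)-p_k(a+b)$ treated in Section~3, because $p_k(n)^2$ and $p_k(n-1)\,p_k(n+1)$ almost cancel, so any estimate that is insensitive to this near-equality destroys the leading behavior outright; the inductive hypothesis on the monotonicity of $\rho_k$ has to be routed back through the recursion with considerable care to retain a positive term of the correct size. The alternative of substituting the Hagis/Rademacher-type asymptotics of $p_k(n)$, in the spirit of Craig--Pun \cite{CP20} via the method of Griffin, Ono, Rolen, and Zagier \cite{GORZ19}, runs into the same wall: their log-concavity holds for $n$ large \emph{depending on $k$}, and making the underlying error term effective and uniform in $k$ is itself the crux. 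A secondary nuisance is the transition region $n\approx k$, where the shape of $p_k$ changes and one must check that $p_k(n)>2^{\sqrt{2n/3+1/4}-3/2}$ still dominates $E$; if it does not, the slightly sharper bound $p_k(n)\ge 2^{\floor{\sqrt{2n/3+1/4}-1/2}}$ noted in the Remark after the Lemma should close the gap.
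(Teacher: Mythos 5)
The first thing to say is that the paper does \emph{not} prove this statement: it is explicitly labelled a conjecture, supported only by the computation of all exceptions for $2\le k\le 100$, $1\le n\le 1000$ (Table \ref{ex}) and by analogy with Theorem \ref{Hauptresultat}. So there is no proof of the authors' to compare against; the only question is whether your program would close the problem on its own, and as written it would not.

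The fatal point is quantitative and sits exactly where you locate the ``hardest part.'' In the Bessenrodt--Ono argument of Section 3 the main term $R_1>\frac{b}{2a^2}p_k(a-1)p_k(b)$ beats all losses because it carries an extra exponentially large factor $p_k(a-1)\ge 2^{\sqrt{2(a-1)/3+1/4}-3/2}$ against error terms of polynomial size $b\,p_k(b)(1+\ln(a+b))$; that exponential gap is the only reason the crude one-sided bounds of the Lemma suffice, and it is why (\ref{final}) has the shape it has. The second difference $\Lambda_k(n)=p_k(n)^2-p_k(n-1)p_k(n+1)$ enjoys no such gap: by the Hagis/Rademacher asymptotics it is of order $p_k(n)^2\,n^{-3/2}$, a vanishing proportion of either term. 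Consequently the target inequality you propose, whose right-hand side is of order $p_k(n)^2\,2^{\sqrt{2n/3}}/n^2\gg p_k(n)^2$, is false for all large $n$, and no rearrangement of the Lemma's estimates can repair it: any bound on $p_k(n-1)p_k(n+1)$ that is off from $p_k(n)^2$ by even a bounded factor swamps the quantity you want to show is positive. A genuine proof needs two-sided, asymptotically tight expansions of $p_k(n)$ with error terms explicit and uniform in $k\ge 30$ --- this is what DeSalvo--Pak and Dong--Ji actually supply for $p_\infty$ and $p_2$ --- and you yourself identify making Craig--Pun effective and uniform in $k$ as the crux; that crux is left untouched. Your stages two and three are sound but are the easy part ($p_k(n)=p_\infty(n)$ for $k>n$ handles the base range and shows $n=25$ stays exceptional). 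Note finally that Table \ref{ex} exhibits genuine failures of log-concavity at odd $n$ near $1.5k$ (e.g.\ $n=29$ for $k=17,19$, $n=57$ for $k=3$), so the inductive step in the transition region you flag is not a ``secondary nuisance'' but the place where the statement is actually sensitive to the hypothesis $k\ge 30$.
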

This is in accordance with
Remark 1.3 given by Craig and Pun \cite{CP21}.
Since $p_k(n)=p_{\infty}(n)$ for $k >n$ we
also have:

\begin{corollary}
Let the conjecture hold
true. Let $k \geq 30$. Then the exceptions $n$ are 
the same as for
$\left\{ p_{\infty}(n)\right\} _{n}$.
\end{corollary}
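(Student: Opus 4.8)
The plan is to derive the corollary directly from the conjecture, together with the elementary fact recalled earlier that $p_k(n) = p_{\infty}(n)$ whenever $k > n$; no further analytic input is needed. Recall that an \emph{exception} for a sequence $\{\alpha(n)\}_n$ is an index $n$ at which log-concavity fails, i.e.\ $\alpha(n)^2 < \alpha(n-1)\,\alpha(n+1)$. Assume the conjecture holds and fix $k \geq 30$. Then $N_k = N_{\infty} = 26$, so $\{p_k(n)\}_n$ is log-concave at every $n \geq 26$; by the Nicolas--DeSalvo--Pak result (equivalently, by $N_{\infty} = 26$) the same is true for $\{p_{\infty}(n)\}_n$. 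Consequently every exception of $\{p_k(n)\}_n$ and every exception of $\{p_{\infty}(n)\}_n$ lies in the finite range $1 \leq n \leq 25$.

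It remains to compare the two sequences on that range. For $1 \leq n \leq 25$, the log-concavity inequality at $n$ involves only the three values $p_k(n-1)$, $p_k(n)$, $p_k(n+1)$, whose arguments are all at most $26$. Since $k \geq 30 > 26$, the identity $p_k(m) = p_{\infty}(m)$ applies to each of these three arguments, so
\[
p_k(n)^2 \geq p_k(n-1)\,p_k(n+1) \iff p_{\infty}(n)^2 \geq p_{\infty}(n-1)\,p_{\infty}(n+1) .
\]
Hence, for every $n$ with $1 \leq n \leq 25$, the index $n$ is an exception for $\{p_k(n)\}_n$ precisely when it is an exception for $\{p_{\infty}(n)\}_n$. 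Combined with the previous paragraph, this shows the two exception sets coincide, and both equal the exception set of $p_{\infty}$, namely the set of odd integers $n$ with $1 \leq n \leq 25$.

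I do not expect a genuine obstacle here: all the difficulty of the statement is absorbed into the conjecture, and the corollary is only the observation that, once log-concavity is known beyond $N_{\infty} = 26$, the finitely many remaining indices involve only partition numbers $p_k(m)$ with $m \leq 26 < k$, on which $p_k$ and $p_{\infty}$ agree literally. The one point to stay alert to is the boundary case $n = 25$, whose inequality refers to $p_k(26)$; this causes no trouble since $k \geq 30 > 26$ still forces $p_k(26) = p_{\infty}(26)$. One could streamline the write-up by first noting that $k \geq 30$ guarantees $p_k$ and $p_{\infty}$ agree on $\{0, 1, \ldots, 26\}$ and then invoking the conjecture.
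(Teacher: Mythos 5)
Your argument is correct and is exactly the paper's reasoning: the authors justify the corollary with the single observation that $p_k(n)=p_{\infty}(n)$ for $k>n$, and your write-up simply makes explicit that the conjecture confines all exceptions to $1\leq n\leq 25$, where the log-concavity test only involves $p_k(m)$ with $m\leq 26<k$. No gap; your version is just a more detailed rendering of the same one-line proof.
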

\begin{table}
\[
\begin{array}{rrrrrrrrrrrrrrrrrrrr}
\hline \hline
n\backslash k&2&3&4&5&6&7&8&9&10&11&12&13&14&15&16&17&18&19&20\\ \hline
1&&\bullet &\bullet &\bullet &\bullet &\bullet &\bullet &\bullet &\bullet &\bullet &\bullet &\bullet &\bullet &\bullet &\bullet &\bullet &\bullet &\bullet &\bullet \\
2&\bullet &&&&&&&&&&&&&&&&&&\\
3&&\bullet &&\bullet &\bullet &\bullet &\bullet &\bullet &\bullet &\bullet &\bullet &\bullet &\bullet &\bullet &\bullet &\bullet &\bullet &\bullet &\bullet \\
4&\bullet &&\bullet &&&&&&&&&&&&&&&&\\
5&&\bullet &&\bullet &\bullet &\bullet &\bullet &\bullet &\bullet &\bullet &\bullet &\bullet &\bullet &\bullet &\bullet &\bullet &\bullet &\bullet &\bullet \\
6&&&&&&&&&&&&&&&&&&&\\
7&&\bullet &&\bullet &\bullet &\bullet &\bullet &\bullet &\bullet &\bullet &\bullet &\bullet &\bullet &\bullet &\bullet &\bullet &\bullet &\bullet &\bullet \\
8&\bullet &&\bullet &&&&&&&&&&&&&&&&\\
9&&\bullet &&\bullet &\bullet &\bullet &&\bullet &\bullet &\bullet &\bullet &\bullet &\bullet &\bullet &\bullet &\bullet &\bullet &\bullet &\bullet \\
10&&&&&&&&&&&&&&&&&&&\\
11&\bullet &\bullet &\bullet &\bullet &\bullet &\bullet &\bullet &\bullet &\bullet &\bullet &\bullet &\bullet &\bullet &\bullet &\bullet &\bullet &\bullet &\bullet &\bullet \\
12&&&&&&&&&&&&&&&&&&&\\
13&\bullet &\bullet &\bullet &\bullet &&\bullet &\bullet &\bullet &\bullet &\bullet &\bullet &\bullet &\bullet &\bullet &\bullet &\bullet &\bullet &\bullet &\bullet \\
14&\bullet &&&&&&&&&&&&&&&&&&\\
15&&\bullet &&\bullet &&\bullet &&\bullet &\bullet &\bullet &\bullet &\bullet &\bullet &\bullet &\bullet &\bullet &\bullet &\bullet &\bullet \\
16&\bullet &&\bullet &&&&&&&&&&&&&&&&\\
17&\bullet &\bullet &&\bullet &&\bullet &\bullet &\bullet &\bullet &\bullet &\bullet &\bullet &\bullet &\bullet &\bullet &\bullet &\bullet &\bullet &\bullet \\
18&&&&&&&&&&&&&&&&&&&\\
19&\bullet &\bullet &&\bullet &&\bullet &\bullet &\bullet &\bullet &\bullet &\bullet &\bullet &\bullet &\bullet &\bullet &\bullet &\bullet &\bullet &\bullet \\
20&\bullet &&&&&&&&&&&&&&&&&&\\
21&&\bullet &&\bullet &&\bullet &&\bullet &&\bullet &\bullet &\bullet &&\bullet &\bullet &\bullet &\bullet &\bullet &\bullet \\
22&&&&&&&&&&&&&&&&&&&\\
23&\bullet &\bullet &&\bullet &&\bullet &&\bullet &&\bullet &&\bullet &\bullet &\bullet &\bullet &\bullet &\bullet &\bullet &\bullet \\
24&&&&&&&&&&&&&&&&&&&\\
25&&\bullet &&\bullet &&\bullet &&\bullet &&\bullet &&\bullet &&\bullet &\bullet &\bullet &\bullet &\bullet &\bullet \\
26&\bullet &&&&&&&&&&&&&&&&&&\\
27&&\bullet &&\bullet &&\bullet &&\bullet &&\bullet &&\bullet &&\bullet &&\bullet &&\bullet &\\
28&&&&&&&&&&&&&&&&&&&\\
29&\bullet &\bullet &&\bullet &&\bullet &&\bullet &&\bullet &&\bullet &&\bullet &&\bullet &&\bullet &\\
30&&&&&&&&&&&&&&&&&&&\\ 
31&&\bullet &&\bullet &&\bullet &&\bullet &&\bullet &&\bullet &&&&&&&\\
32&\bullet &&&&&&&&&&&&&&&&&&\\
33&&\bullet &&\bullet &&\bullet &&\bullet &&&&&&&&&&&\\
34&&&&&&&&&&&&&&&&&&&\\
35&&\bullet &&\bullet &&\bullet &&&&&&&&&&&&&\\
36&&&&&&&&&&&&&&&&&&&\\
37&&\bullet &&\bullet &&&&&&&&&&&&&&&\\
38&&&&&&&&&&&&&&&&&&&\\
39&&\bullet &&\bullet &&&&&&&&&&&&&&&\\
40&&&&&&&&&&&&&&&&&&&\\
41&&\bullet &&\bullet &&&&&&&&&&&&&&&\\
42&&&&&&&&&&&&&&&&&&&\\
43&&\bullet &&&&&&&&&&&&&&&&&\\
44&&&&&&&&&&&&&&&&&&&\\
45&&\bullet &&&&&&&&&&&&&&&&&\\
\hline \hline
\end{array}
\]
\caption{\label{ex}Exceptions of log-concavity at $n$ for $2\leq k\leq 20$ and $1\leq n\leq 45$}
\end{table}

\begin{thebibliography}{GORZ19}
\bibitem[AGM17]{AGM17} A. A. Alanazi, S. M. Gagola III, A. O. Munagi: \emph{Combinatorial proof of a partition inequality of Bessenrodt--Ono.\/}
Ann.\ Comb.\ 
\textbf{21} (2017), 331--337.  

\bibitem[An98]{An98}
G. E. Andrews: The
Theory of
Partitions. Cambridge Univ.\ Press, Cambridge (1998).

\bibitem[BB16]{BB16} O. Beckwith, C. Bessenrodt: \emph{Multiplicative properties of the number of $k$-regular partitions.\/}
Ann.\ Comb.\
\textbf{20} No.\ 2 (2016), 231--250.  

\bibitem[BR24]{BR24} B. Benfield, A. Roy: \emph{Log-concavity
and
the
multiplicative
properties of
restricted
partition
functions.\/}
arXiv:2404.03153 (2024).




\bibitem[BO16]{BO16} C.~Bessenrodt, K. Ono: \emph{Maximal multiplicative properties of partitions.\/}
Ann.\ Comb.\
\textbf{20} No.\ 1 (2016), 59--64.  



\bibitem[BFH24]{BFH24} K. Bringmann, J. Franke, B. Heim:
\emph{Asymptotics of
commuting $\ell$-tuples in 
symmetric
groups and
log-concavity.\/} arXiv:2401.05874v1
(2024).








\bibitem[CFT18]{CFT18} S. Chern, S. Fu, D. Tang: \emph{Some inequalities for $k$-colored partition functions.\/}
Ramanujan J.
\textbf{46} (2018), 713--725.  

\bibitem[CP20]{CP20} W. Craig, A. Pun: \emph{Higher order Tur\'an inequalities for $k$-regular partitions.\/}
S\'eminaire Loth.\ de Comb.\ \textbf{84B} Article \#{23} (2020).

\bibitem[CP21]{CP21} W. Craig, A. Pun: \emph{A note on the higher order Tur\'an inequalities for $k$-regular partitions.\/}
Research in
Number Theory \textbf{7}
Article \#5 (2021).






\bibitem[DM19]{DM19} M. L. Dawsey, R. Masri: \emph{Effective bounds for the Andrews spt-function.\/}
Forum Math.\ \textbf{31} No.\ 3 (2019), 743--767.


\bibitem[DP15]{DP15} S. DeSalvo, I. Pak: \emph{Log-concavity of the partition function.\/}
Ramanujan J.
\textbf{38} (2015), 61--73. 

\bibitem[DJ24]{DJ24} J. Dong, K. Ji: \emph{Higher order Tur\'an inequalities 
for the distinct partition function.\/} Journal of Number Theory \textbf{260} (2024), 71--102. 

\bibitem[GHN24]{GHN24} K. Gajdzica, B. Heim, M. Neuhauser:
\emph{Polynomization of the Bessenrodt--Ono
type
inequalities for $A$-partition
functions.\/}
Ann.\
Comb.\
(2024). https://doi.org/10.1007/s00026-024-00692-4

\bibitem[GMU24]{GMU24} K. Gajdzica, P. Miska, M. Ulas: \emph{On a
general
approach to Bessenrodt--Ono
type
inequalities and
log-concavity
properties.\/}
Ann.\ Comb.\ (2024).
https://doi.org/10.1007/s00026-024-00700-7



\bibitem[GORZ19]{GORZ19} M. Griffin, K. Ono, L. Rolen, D. Zagier: \emph{Jensen polynomials for the Riemann zeta function and other sequences.\/} Proc. Natl.\ Acad.\ Sci., USA \textbf{116}
No.\ 23 (2019), 11103--11110.

\bibitem[Ha71]{Ha71} P. Hagis: \emph{Partitions with a restriction on the multiplicity of the summands.\/} Trans.\ Amer.\ Math.\ Soc.\ \textbf{155}
(1971), 375--384.



\bibitem[HN19]{HN19} B. Heim, M. Neuhauser: \emph{Variants of a partition inequality of Bessenrodt--Ono.\/} Research in Number Theory \textbf{5} Article \#32 (2019).

\bibitem[HN22]{HN22} B. Heim, M. Neuhauser:
\emph{Proof of the Bessenrodt--Ono
inequality by
induction.\/}
Research in Number Theory \textbf{8}
Article \#{3} (2022).


\bibitem[HNT20]{HNT20} B. Heim, M. Neuhauser, R. Tr\"{o}ger:
\emph{Polynomization of the Bessenrodt--Ono inequality.\/}
Ann.\
Comb.\
\textbf{24} (2020), 697--709.

\bibitem[HNT23]{HNT23} B. Heim, M. Neuhauser, R. Tr\"{o}ger:
{\it Inequalities for
plane
partitions.\/}
Ann.\ Comb.\ \textbf{27} (2023), 87--108.
 
\bibitem[HJ18]{HJ18} E. Hou, M. Jagadeesan: \emph{Dyson's partition ranks and their multiplicative extension.\/}
Ramanujan J.
\textbf{45} No.\ 3 (2018), 817--839.


\bibitem[Le39]{Le39} D. H. Lehmer: \emph{On the remainders and convergence of the series for the partition functions.\/}
Trans.\ Amer.\ Math.\ Soc.\
\textbf{46} (1939), 362--373.  

\bibitem[Li23]{Li23} X. Li: \emph{Polynomization of the Liu--Zhang inequality for the overpartition function.\/} Ramanujan J. \textbf{62} (2023), 797--817.





\bibitem[Ma20]{Ma20}  J. Males: \emph{Asymptotic equidistribution and convexity
for partition ranks.\/} Ramanujan J.
\textbf{54} No.\ 2 (2021), 397--413.



\bibitem[Ni78]{Ni78} J.-L. Nicolas: \emph{Sur les entiers $N$ pour lesquels il y a beaucoup des groupes ab{\'{e}}liens d'ordre $N$.\/}
Ann.\ Inst.\ Fourier \textbf{28}
No.\ 4 (1978), 1--16.

\bibitem[On04]{On03}
K. Ono:
The Web of Modularity: Arithmetic of the Coefficients of Modular Forms
and q-series.
CBMS Regional Conference Series in Mathematics \textbf{102},
American Mathematical Society, Providence, RI (2004).



\bibitem[Ra37]{Ra37} H. Rademacher: \emph{A convergent series for the partition function $p(n)$.\/}
Proc.\ Natl.\ Acad.\ Sci.\ USA
\textbf{23} (1937), 78--84.  




















































































\end{thebibliography}
\end{document}